\let\origsection=\section \def\section{\@ifstar{\origsection*}{\mysection}}
\def\mysection{\@startsection{section}{1}\z@{.7\linespacing\@plus\linespacing}{.5\linespacing}{\normalfont\scshape\centering\S}}
\definecolor{codelightgray}{gray}{0.8}
\definecolor{codeverylightgray}{gray}{0.9}
\renewcommand{\PrintDOI}[1]{\doi{#1}}
\DeclareRobustCommand{\rn}[1]{  {\fontencoding{OT2}\selectfont#1}}
\numberwithin{equation}{section}
\numberwithin{figure}{section}
\let\polishlcross=\l
\def\l{\ifmmode\ell\else\polishlcross\fi}
\def\paragraph#1{	\noindent\textbf{#1.}\enspace}
\let\sm=\setminus
\def\moverlay{\mathpalette\mov@rlay}
\def\mov@rlay#1#2{\leavevmode\vtop{   \baselineskip\z@skip \lineskiplimit-\maxdimen
		\ialign{\hfil$\m@th#1##$\hfil\cr#2\crcr}}}
\newcommand{\charfusion}[3][\mathord]{
	#1{\ifx#1\mathop\vphantom{#2}\fi
		\mathpalette\mov@rlay{#2\cr#3}
	}
	\ifx#1\mathop\expandafter\displaylimits\fi}
\newcommand{\dcup}{\charfusion[\mathbin]{\cup}{\cdot}}
\DeclareFontFamily{U}  {MnSymbolC}{}
\DeclareSymbolFont{MnSyC}         {U}  {MnSymbolC}{m}{n}
\DeclareFontShape{U}{MnSymbolC}{m}{n}{
	<-6>  MnSymbolC5
	<6-7>  MnSymbolC6
	<7-8>  MnSymbolC7
	<8-9>  MnSymbolC8
	<9-10> MnSymbolC9
	<10-12> MnSymbolC10
	<12->   MnSymbolC12}{}
\DeclareMathSymbol{\powerset}{\mathord}{MnSyC}{180}
\let\epsilon=\varepsilon
\let\rho=\varrho
\let\theta=\vartheta
\def\cf{\mathrm{cf}}
\newcommand{\ccM}{\mathscr{M}}
\theoremstyle{plain}
\newtheorem{thm}{Theorem}[section]
\newtheorem{claim}[thm]{Claim}
\newtheorem{fact}[thm]{Fact}
\newtheorem{cor}[thm]{Corollary}
\theoremstyle{definition}
\let\lra=\longrightarrow
\let\phi=\varphi
\let\vn=\varnothing
\def\cf{\mathrm{cf}}
\newcommand{\redge}[8]{

		\ifx\relax#5\relax
		\def\qoffs{0pt}
	\else
		\def\qoffs{#5}
	\fi

				\def\rhedge{
		($#1+#4!\qoffs!-90:#2-#4$) -- 
		($#2+#1!\qoffs!-90:#3-#1$) -- 
		($#3+#2!\qoffs!-90:#4-#2$) -- 
		($#4+#3!\qoffs!-90:#1-#3$) -- cycle}

	\coordinate (12) at ($#1!\qoffs!90:#2$);
	\coordinate (14) at ($#1!\qoffs!-90:#4$);
	\coordinate (23) at ($#2!\qoffs!90:#3$);
	\coordinate (21) at ($#2!\qoffs!-90:#1$);
	\coordinate (34) at ($#3!\qoffs!90:#4$);
	\coordinate (32) at ($#3!\qoffs!-90:#2$);
	\coordinate (41) at ($#4!\qoffs!90:#1$);
	\coordinate (43) at ($#4!\qoffs!-90:#3$);
	
	\def\nrhedge{
		(14) let \p1=($(14)-#1$), \p2=($(12)-#1$) in 
			arc[start angle={atan2(\y1,\x1)}, delta angle={atan2(\y2,\x2)-atan2(\y1,\x1)-360*(atan2(\y2,\x2)-atan2(\y1,\x1)>0)}, x radius=\qoffs, y radius=\qoffs] --
		(21) let \p1=($(21)-#2$), \p2=($(23)-#2$) in 
			arc[start angle={atan2(\y1,\x1)}, delta angle={atan2(\y2,\x2)-atan2(\y1,\x1)-360*(atan2(\y2,\x2)-atan2(\y1,\x1)>0)}, x radius=\qoffs, y radius=\qoffs] --
		(32) let \p1=($(32)-#3$), \p2=($(34)-#3$) in 
			arc[start angle={atan2(\y1,\x1)}, delta angle={atan2(\y2,\x2)-atan2(\y1,\x1)-360*(atan2(\y2,\x2)-atan2(\y1,\x1)>0)}, x radius=\qoffs, y radius=\qoffs] --
		(43) let \p1=($(43)-#4$), \p2=($(41)-#4$) in 
			arc[start angle={atan2(\y1,\x1)}, delta angle={atan2(\y2,\x2)-atan2(\y1,\x1)-360*(atan2(\y2,\x2)-atan2(\y1,\x1)>0)}, x radius=\qoffs, y radius=\qoffs] --
		cycle}

		\ifx\relax#6\relax
		\def\rlwidth{1pt}
	\else
		\def\rlwidth{#6}
	\fi
	
		\ifx\relax#8\relax
		\fill \nrhedge;
	\else
		\fill[#8]\nrhedge;
	\fi

		\ifx\relax#7\relax
		\draw[line width=\rlwidth,rounded corners=\qoffs]\nrhedge;
	\else
		\draw[line width=\rlwidth,#7]\nrhedge;
	\fi
	}
\newcommand{\qedge}[7]{

	\ifx\relax#4\relax
		\def\qoffs{0pt}
	\else
		\def\qoffs{#4}
	\fi

	\def\qhedge{
		($#1+#3!\qoffs!-90:#2-#3$) --
		($#2+#1!\qoffs!-90:#3-#1$) --
		($#3+#2!\qoffs!-90:#1-#2$) -- cycle}

	\coordinate (12) at ($#1!\qoffs!90:#2$);
	\coordinate (13) at ($#1!\qoffs!-90:#3$);
	\coordinate (23) at ($#2!\qoffs!90:#3$);
	\coordinate (21) at ($#2!\qoffs!-90:#1$);
	\coordinate (31) at ($#3!\qoffs!90:#1$);
	\coordinate (32) at ($#3!\qoffs!-90:#2$);
	
	\def\nqhedge{
		(13) let \p1=($(13)-#1$), \p2=($(12)-#1$) in
			arc[start angle={atan2(\y1,\x1)}, delta angle={atan2(\y2,\x2)-atan2(\y1,\x1)-360*(atan2(\y2,\x2)-atan2(\y1,\x1)>0)}, x radius=\qoffs, y radius=\qoffs] --
		(21) let \p1=($(21)-#2$), \p2=($(23)-#2$) in
			arc[start angle={atan2(\y1,\x1)}, delta angle={atan2(\y2,\x2)-atan2(\y1,\x1)-360*(atan2(\y2,\x2)-atan2(\y1,\x1)>0)}, x radius=\qoffs, y radius=\qoffs] --
		(32) let \p1=($(32)-#3$), \p2=($(31)-#3$) in
			arc[start angle={atan2(\y1,\x1)}, delta angle={atan2(\y2,\x2)-atan2(\y1,\x1)-360*(atan2(\y2,\x2)-atan2(\y1,\x1)>0)}, x radius=\qoffs, y radius=\qoffs] --
		cycle}

		\ifx\relax#5\relax
		\def\qlwidth{1pt}
	\else
		\def\qlwidth{#5}
	\fi
	
		\ifx\relax#7\relax
		\fill \nqhedge;
	\else
		\fill[#7]\nqhedge;
	\fi

		\ifx\relax#6\relax
		\draw[line width=\qlwidth,rounded corners=\qoffs]\nqhedge;
	\else
		\draw[line width=\qlwidth,#6]\nqhedge;
	\fi
}
\definecolor{amet}{rgb}{0.54,0.0,1.0}
\definecolor{electricviolet}{rgb}{0.56, 0.0, 1.0}
\definecolor{electricindigo}{rgb}{0.44, 0.0, 1.0}
\begin{document}
\title[Obligatory hypergraphs]{Obligatory hypergraphs}
\author[Christian Reiher]{Christian Reiher}
\address{Fachbereich Mathematik, Universit\"at Hamburg, Hamburg, Germany}
\email{christian.reiher@uni-hamburg.de }
\subjclass[2010]{03E05, 05C15, 05D10}
\keywords{chromatic number, infinite hypergraphs}

\begin{abstract}
	Erd\H{o}s and Hajnal proved that every graph of uncountable chromatic number 
	contains arbitrarily large finite, complete, bipartite graphs. We extend this 
	result to hypergraphs. 
\end{abstract}
	
\maketitle

\section{Introduction}
For an integer $k\ge 2$ a {\it $k$-uniform hypergraph} is a pair $H=(V, E)$ consisting
of a (finite or infinite) set of {\it vertices} $V$ and a 
set $E\subseteq V^{(k)}=\{e\subseteq V\colon |e|=k\}$ of $k$-element subsets 
of~$V$, called the {\it edges} of~$H$. 
The {\it chromatic number} of a hypergraph $H=(V, E)$, denoted by~$\chi(H)$, is the 
least cardinal $\kappa$ for which there exists a colouring $f\colon V\lra\kappa$ such 
that no edge of $H$ is monochromatic with respect to $f$. A finite hypergraph $F$ is
said to be {\it obligatory} if every hypergraph $H$ with $\chi(H)\ge \aleph_1$ has 
a subhypergraph isomorphic to $F$. 
Erd\H{o}s and Hajnal~\cite{EH66}*{Corollary 5.6 and Theorem 7.4} obtained the 
following characterisation of obligatory graphs.  

\begin{thm}[Erd\H{o}s and Hajnal]\label{thm:11}
	A finite graph is obligatory if and only if it is bipartite. \qed
\end{thm}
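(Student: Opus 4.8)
\textit{Overview and the \emph{if} direction.} The plan is to prove the two implications separately; each rests on one structural fact about graphs of uncountable chromatic number. The \emph{if} direction is the quick one. Suppose $F$ is bipartite and set $n:=|V(F)|$; then $F$ embeds into the complete bipartite graph $K_{n,n}$. By the theorem of Erd\H{o}s and Hajnal recalled above (\cite{EH66}), every graph $H$ with $\chi(H)\ge\aleph_1$ contains a copy of $K_{n,n}$, hence a copy of $F$, and so $F$ is obligatory.

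\textit{Reduction for the \emph{only if} direction.} Here I argue contrapositively: assuming $F$ is not bipartite, I produce a graph $H$ with $\chi(H)\ge\aleph_1$ that contains no copy of $F$. Being non-bipartite, $F$ has an odd cycle, so its odd girth $g$ (the length of a shortest odd cycle) is finite and satisfies $g\le|V(F)|$. It therefore suffices to construct, for each integer $r$, a graph $G_r$ with $\chi(G_r)\ge\aleph_1$ and odd girth greater than $r$: applied with $r=|V(F)|$, this $G_r$ cannot contain $F$, since a copy of $F$ would carry an odd cycle of length $g\le r$, contradicting the odd girth of $G_r$. Thus $F$ fails to be obligatory.

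\textit{Construction of $G_r$ and the main obstacle.} The substance of the argument is the construction of the graphs $G_r$, which again goes back to Erd\H{o}s and Hajnal. I would use shift graphs: for a cardinal $\kappa$ and an integer $n\ge 2$, let $\Sh(\kappa,n)$ have as vertices the strictly increasing $n$-tuples from $\kappa$, with a tuple joined to each tuple obtained from it by deleting the smallest entry and appending a new larger one. Two properties must be verified. First, the sliding-window structure forces odd cycles to be long: the odd girth of $\Sh(\kappa,n)$ grows with $n$ (already $\Sh(\kappa,2)$ is triangle-free), so a suitable choice of $n$ makes it exceed $r$ irrespective of $\kappa$. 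Second, for fixed $n$ the chromatic number of $\Sh(\kappa,n)$ is unbounded in $\kappa$; quantitatively, $\Sh(\kappa,n)$ admits no proper colouring with $\mu$ colours once $\kappa$ exceeds the $(n-1)$-fold iterated power $\exp_{n-1}(\mu)$, so choosing $\kappa$ above the corresponding $\beth$-number yields $\chi(\Sh(\kappa,n))\ge\aleph_1$. Setting $G_r:=\Sh(\kappa,n)$ for such a pair $(n,\kappa)$ then completes the proof. I expect the chromatic lower bound to be the main obstacle: establishing $\chi\ge\aleph_1$ rests on an Erd\H{o}s--Rado type counting argument linking colourings of $\Sh(\kappa,n)$ to the partition calculus, and it is precisely there that the large cardinal $\kappa$---and with it the hypothesis $\aleph_1$ rather than merely $\aleph_0$---genuinely enters.
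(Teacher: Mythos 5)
Your proposal is correct and takes essentially the same route as the paper, which states Theorem~\ref{thm:11} without its own proof and simply cites \cite{EH66}: your two halves --- embedding a bipartite $F$ into $K_{n,n}$ and invoking the $K_{n,n}$-containment theorem, and excluding a non-bipartite $F$ via the shift graphs $\Sh(\kappa,n)$, whose odd girth grows with $n$ and whose chromatic number exceeds $\mu$ once $\kappa>\exp_{n-1}(\mu)$ --- are exactly the ingredients (Corollary~5.6 and Theorem~7.4) of the Erd\H{o}s--Hajnal paper that the citation points to.
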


For general hypergraphs no comparable result is known, and no plausible conjecture 
has ever been proposed. The earliest contribution to this area we are aware 
of is due to Erd\H{o}s, Hajnal, and Rothschild~\cite{EHRoth}, who showed that 
hypergraphs with two edges intersecting in two or more vertices are non-obligatory.
A systemstic study of the $3$-uniform case was initiated by
Erd\H{o}s, Galvin, and Hajnal~\cite{EGH}, and later continued by  
Komj\'ath~\cites{kom01, kom08, HK08}. Many results from~\cite{kom01} can be 
extended in a straightforward manner to $k$-uniform hypergraphs. For instance, 
every obligatory $k$-uniform hypergraph is $k$-partite, and the class of obligatory, 
$k$-uniform hypergraphs is closed under taking disjoint unions and one-point 
amalgamations. In particular, every finite forest is obligatory, i.e., every finite 
hypergraph $F$ whose set of edges admits an enumeration $E(F)=\{e_0, \dots, e_{n-1}\}$
such that $\big|e_i\cap\bigcup_{j<i}e_j\big|\le 1$ holds for every positive $i<n$.
To the best of our knowledge, however, it has never been shown that for any~$k\ge 3$
there exists an obligatory, $k$-uniform hypergraph which fails to be a forest. 

The goal of this article is to describe a natural class of obligatory hypergraphs
that generalise bipartite graphs. Given an arbitrary graph $F$ and an integer $k$
the {\it expansion} $F^{(k)}$ is defined to be the $k$-uniform hypergraph derived 
from $F$ by adding $k-2$ new vertices to every edge; thus it has $|V(F)|+(k-2)|E(F)|$
vertices and there is a bijection $\varphi\colon E(F)\lra E(F^{(k)})$ such that 
$e\subseteq \phi(e)$ and $e\cap e'=\phi(e)\cap\phi(e')$ hold for all distinct 
$e, e'\in E(F)$. Hypergraph expansions were introduced about twenty years ago by 
Mubayi in his work on the hypergraph Tur\'an problem~\cite{Mubayi}. 
A few examples of the construction are displayed in Figure~\ref{fig11}.

\begin{figure}[h!]
	\begin{subfigure}[b]{.32\textwidth}
		\centering
	\begin{tikzpicture}
		\def\b{.8};
		\coordinate (a12) at (0,0);
		\coordinate (a11) at (-2*\b,0);
		\coordinate (a13) at (2*\b,0);
		\coordinate (a21) at (-\b, 1.732*\b);
		\coordinate (a22) at (\b, 1.732*\b);
		\coordinate (a31) at (0, 3.464*\b);
		
	\qedge{(a11)}{(a21)}{(a12)}{5pt}{.6pt}{red}{red, opacity=.3};
		\qedge{(a12)}{(a22)}{(a13)}{5pt}{.6pt}{red}{red, opacity=.3};
			\qedge{(a21)}{(a31)}{(a22)}{5pt}{.6pt}{red}{red, opacity=.3};
		
		\foreach \i in {11, 12, 13, 21, 22, 31} \fill (a\i) circle (1.5pt);
		
	\end{tikzpicture}
\caption{$C_3^{(3)}$}\label{fig11A}
	\end{subfigure}
\begin{subfigure}[b]{.25\textwidth}
	\centering
\begin{tikzpicture}
		\def\b{1.12}
	\foreach \i in {1,...,4}
	{	\foreach \j in {1,...,4} {\coordinate (a\i\j) at (\j*\b,\i*\b);}}
		
		\redge{(a23)}{(a13)}{(a12)}{(a22)}{5pt}{.6pt}{yellow!80!black}{yellow, opacity=.4};
		\redge{(a43)}{(a33)}{(a32)}{(a42)}{5pt}{.6pt}{yellow!80!black}{yellow, opacity=.4};
		\redge{(a32)}{(a22)}{(a21)}{(a31)}{5pt}{.6pt}{yellow!80!black}{yellow, opacity=.4};
		\redge{(a34)}{(a24)}{(a23)}{(a33)}{5pt}{.6pt}{yellow!80!black}{yellow, opacity=.4};

		\foreach \i in {12, 13, 21, 22, 23, 24, 31, 32, 33, 34, 42, 43} \fill (a\i) circle (1.5pt);
\end{tikzpicture}
\caption{$C_4^{(4)}$}\label{fig11B}
\end{subfigure}
\begin{subfigure}[b]{.32\textwidth}
	\centering
	\begin{tikzpicture}
		\def\h{1.2};
		\def\w{.4};

			\coordinate (a12) at (-3*\w, 0);
				\coordinate (a13) at (-\w, 0);
				\coordinate (a14) at (\w, 0);
					\coordinate (a15) at (3*\w, 0);
				
			\coordinate (a32) at (0,2*\h);		
			\coordinate (a21) at ($(a12)!.5!(a32)$);
			\coordinate (a22) at ($(a15)!.5!(a32)$);
			\coordinate (a33) at  ($(a13)!2!(a22)$);
			\coordinate (a31) at  ($(a14)!2!(a21)$);
			\coordinate (a11) at  ($(a33)!2!(a21)$);
			\coordinate (a16) at  ($(a31)!2!(a22)$);
						
			\qedge{(a12)}{(a21)}{(a32)}{5pt}{.6pt}{red}{red, opacity=.3};
			\qedge{(a15)}{(a22)}{(a32)}{5pt}{.6pt}{red}{red, opacity=.3};
			\qedge{(a13)}{(a22)}{(a33)}{5pt}{.6pt}{red}{red, opacity=.3};
			\qedge{(a14)}{(a21)}{(a31)}{5pt}{.6pt}{red}{red, opacity=.3};
			\qedge{(a33)}{(a21)}{(a11)}{5pt}{.6pt}{red}{red, opacity=.3};
			\qedge{(a31)}{(a22)}{(a16)}{5pt}{.6pt}{red}{red, opacity=.3};

		\foreach \i in {11, 12, 13, 14, 15,16,32,21,22, 33,31} \fill (a\i) circle (1.5pt);				
	\end{tikzpicture}
\caption{$K^{(3)}_{2,3}$}\label{fig11C}
\end{subfigure}
\caption{Some expansions}\label{fig11}
\end{figure}
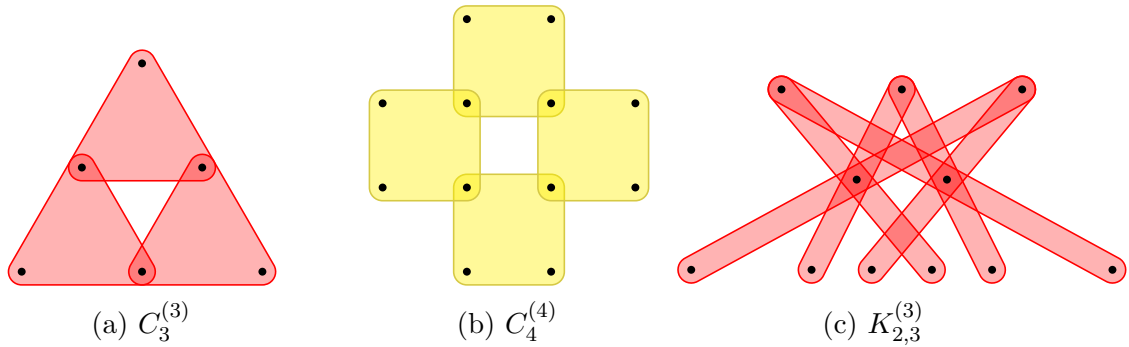

In the result that follows, $K_{n, n}$ refers to the complete bipartite graph
with $n$ vertices in each of its vertex classes, and $K^{(k)}_{n, n}$ denotes 
its $k$-uniform expansion. 
     
\begin{thm}\label{thm:12}
	For all integers $k\ge 2$ and $n\ge 1$ the $k$-uniform hypergraph $K^{(k)}_{n, n}$
	is obligatory. 
\end{thm}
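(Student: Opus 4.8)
The plan is to establish the contrapositive: every $k$-uniform hypergraph $H$ that contains no copy of $K^{(k)}_{n,n}$ satisfies $\chi(H)\le\aleph_0$. The base case $k=2$ is exactly the nontrivial direction of Theorem~\ref{thm:11}, so I would argue for $k\ge 3$ and use Theorem~\ref{thm:11} as a black box for graphs. The engine is a \emph{sunflower $2$-shadow}: set $R=(k-2)n^2+2n+1$ and let $G$ be the graph on $V(H)$ in which $u$ and $v$ are joined whenever $H$ contains $R$ edges any two of which meet exactly in $\{u,v\}$ (an $R$-sunflower with core $\{u,v\}$ and pairwise disjoint petals of size $k-2$). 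The point of this definition is a one-line lifting lemma: if $G\supseteq K_{n,n}$, with parts $\{a_1,\dots,a_n\}$ and $\{b_1,\dots,b_n\}$, then $H\supseteq K^{(k)}_{n,n}$. Indeed, I would process the $n^2$ pairs $(a_i,b_j)$ greedily; when treating a given pair, fewer than $R=(k-2)n^2+2n+1$ vertices have been used so far (the core together with the previously chosen petals), and since the $R$ petals available at $\{a_i,b_j\}$ are pairwise disjoint, each used vertex blocks at most one of them, leaving a petal disjoint from everything chosen before. These petals furnish the required private vertices of the expansion.

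Consequently it suffices to prove that $\chi(G)\ge\aleph_1$ whenever $\chi(H)\ge\aleph_1$: Theorem~\ref{thm:11} then yields $K_{n,n}\subseteq G$ and the lifting lemma finishes the argument. Equivalently, I must transfer a proper countable colouring $c$ of $G$ to a good countable colouring of $H$. The only edges of $H$ that $c$ can make monochromatic are those all of whose pairs are non-adjacent in $G$; call the hypergraph they span $H^{-}$. If I can produce a good countable colouring $c'$ of $H^{-}$, then the product $v\mapsto(c(v),c'(v))$ is good for $H$: every edge with a $G$-adjacent pair is broken in the first coordinate, and every edge of $H^{-}$ in the second.

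It remains to colour $H^{-}$, and this is where the structural gain of the sunflower condition is used. In $H^{-}$ every pair $\{u,v\}$ has a petal family of matching number $<R$, so by a K\"onig-type bound a maximal disjoint petal family is a cover: there is a set $C_{u,v}$ of fewer than $R(k-2)$ vertices meeting every petal at $\{u,v\}$. Thus every edge through $\{u,v\}$ contains a third vertex from the bounded set $C_{u,v}$, and the whole ``concentration'' of $H^{-}$ has been pushed from pairs onto triples. I would now iterate the construction over cores of size $t=2,3,\dots,k-1$: at level $t$ one records $t$-sets carrying an $R_t$-sunflower with disjoint petals of size $k-t$, the corresponding shadow feeds Theorem~\ref{thm:11} (after the same greedy lifting, now with a $t$-set pinned), and the sunflower lemma forces the residual concentration up to $(t+1)$-sets. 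At $t=k-1$ the petals are single vertices, the process terminates, and the finitely many colourings obtained along the way are combined by a product into a single countable colouring of $H$.

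The main obstacle is exactly this residual analysis: showing that the ``sunflower-poor'' part of $H$ is countably chromatic, that is, that concentration genuinely migrates to larger cores without ever producing new uncountable chromatic number. Two tempting shortcuts must be avoided. First, the plain $2$-shadow (join $u,v$ whenever some edge contains both) dominates $\chi(H)$ but does not control overlaps, so its large complete bipartite subgraphs need not lift; this is visible already for the hypergraph all of whose edges share a fixed vertex, which has $\chi=2$ yet a complete $2$-shadow. Second, an induction on $k$ through vertex links is blocked by a shared-vertex phenomenon: a copy of $K^{(k-1)}_{n,n}$ in the link of a vertex $w$ reinstates $w$ as a common extra vertex, violating the disjointness that an expansion demands. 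The sunflower decomposition is designed precisely to keep the extra vertices private, and verifying that it terminates with a countable colouring is the technical heart of the proof.
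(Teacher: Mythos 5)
Your first half is sound and in fact parallels the paper: your ``sunflower $2$-shadow'' is the paper's hypergraph $H^{(\ell)}$ of delta-system roots (with $\ell=2$), and your greedy lifting lemma is exactly the extension argument of Claim~\ref{clm:21}, with essentially the same counting. The genuine gap is in your endgame, and it is not merely a technical verification you postponed --- the claim that the ``sunflower-poor'' residual part is countably chromatic via a terminating local iteration is \emph{false as a general principle}, so no amount of work on the residual analysis as you set it up can close the proof. Concretely: by Erd\H{o}s--Hajnal--Rothschild~\cite{EHRoth} there exist uncountably chromatic $k$-uniform hypergraphs in which any two edges meet in at most one vertex. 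For such an $H$ every one of your core hypergraphs at every level $t\in[2,k)$ is empty (no two edges even share a $t$-set, let alone form a large sunflower), all your local invariants (pair degrees, the covers $C_{u,v}$) are trivially bounded, and your iteration assigns all of $H$ to the residual; your final step would then conclude $\chi(H)\le\aleph_0$, a contradiction. Such an $H$ is consistent with the theorem only because it actually \emph{contains} $K^{(k)}_{n,n}$ (the expansion is a linear hypergraph), and locating that copy is precisely what no K\"onig-type local bookkeeping can deliver. A secondary error: at levels $t\ge 3$ your core hypergraphs are $t$-uniform, so Theorem~\ref{thm:11} does not apply to them; you would need obligatoriness of $t$-uniform expansions, i.e.\ an induction on the uniformity, which your setup (``use Theorem~\ref{thm:11} as a black box for graphs'') does not provide.

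The paper's proof shows what the missing global ingredients are. It takes a counterexample minimal in \emph{both} the uniformity $k$ (supplying obligatoriness of $K^{(\ell)}_{n,n}$ and $K^{(k-1)}_{q,q}$ for the lower-uniformity steps) and the carrier cardinal $\kappa$, and it never tries to colour the residual $E''$ directly. Instead it decomposes $\kappa$ by a continuous chain of elementary submodels into intervals $I_i$ and uses reflection to show that every residual edge has at least $k-1$ vertices inside a single interval (Claim~\ref{clm:22}); minimality of $\kappa$ colours the edges lying wholly inside one interval; the surviving edges induce $(k-1)$-uniform trace hypergraphs $(I_i,F_i)$, one of which is uncountably chromatic, and then the good-pair pigeonhole argument either assembles a copy of $K^{(k)}_{n,n}$ or produces a size-$t$ delta system with a $2$-element root among edges of $E''$ --- both contradictions. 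In the linear example above it is exactly this submodel-plus-trace machinery that finds the forbidden configuration. These set-theoretic, global tools (elementary submodels, transfinite induction on $\kappa$) have no counterpart in your proposal, and the example shows they cannot be replaced by the purely local sunflower-cover iteration you describe.
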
 

In particular, for every even $\ell\ge 4$ the expanded $\ell$-cycle $C^{(k)}_\ell$
is obligatory. The hypergraph~$C^{(3)}_3$, on the other hand, was shown to be  
non-obligatory by Erd\H{o}s, Galvin, and Hajnal~\cite{EGH}*{Theorem 11.6}. 
A perhaps gentler introduction to obligatory hypergraphs can be found in 
our recent survey~\cite{girth-survey}*{\S3}.
 
\section{The proof}

We shall exploit the following submultiplicativity property of the chromatic number,
which was observed for finite graphs by Zykov~\cite{Zykov}*{\rn{Teorema}~2}. 
His product argument establishes the general case as well.
 
\begin{fact}\label{f:zykov}
	Let $H=(V, E)$ be a hypergraph. If $E=\bigcup_{i\in I}E_i$, then 
		\[
		\pushQED{\qed} 
		\chi(H)\le \prod_{i\in I} \chi(V, E_i)\,. \qedhere
		\popQED
	\]
	\end{fact}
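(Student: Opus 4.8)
The plan is to follow Zykov's product construction. Writing $\kappa_i=\chi(V,E_i)$ for each $i\in I$, I would first invoke the axiom of choice to fix, for every $i\in I$, a colouring $f_i\colon V\lra\kappa_i$ under which no edge of $(V,E_i)$ is monochromatic; such a colouring exists by the very definition of $\kappa_i$ as a chromatic number. The target of the colouring of $H$ we are after will be the cartesian product $P=\prod_{i\in I}\kappa_i$, whose cardinality is by definition the cardinal $\prod_{i\in I}\kappa_i$.

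Next I would define the product colouring $f\colon V\lra P$ by $f(v)=(f_i(v))_{i\in I}$ and check that no edge of $H$ is monochromatic with respect to it. Indeed, given any $e\in E$, the hypothesis $E=\bigcup_{i\in I}E_i$ supplies an index $i\in I$ with $e\in E_i$. Since $f_i$ properly colours $(V,E_i)$, there are two vertices $u,v\in e$ with $f_i(u)\neq f_i(v)$, and these same two vertices already receive distinct colours under $f$, because their images differ in the $i$-th coordinate. Hence $e$ is not monochromatic, and as $e$ was arbitrary the colouring $f$ witnesses $\chi(H)\le|P|=\prod_{i\in I}\kappa_i$, which is the asserted inequality.

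There is essentially no genuine obstacle here beyond bookkeeping; the only points that merit a word of care are the appeal to choice in selecting the $f_i$ and the degenerate cases. If $I=\varnothing$, then $E=\varnothing$, the empty product equals $1$, and the constant colouring confirms $\chi(H)\le 1$; if some $\kappa_i=0$, then already $V=\varnothing$ and both sides of the inequality vanish. In every remaining case the argument above applies verbatim, so the submultiplicativity of the chromatic number follows.
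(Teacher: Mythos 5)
Your proof is correct and is precisely the product argument the paper attributes to Zykov (the paper states the fact with a \qed and the remark that ``his product argument establishes the general case as well''), so you have supplied exactly the intended proof. The handling of the degenerate cases and the explicit appeal to choice in selecting the colourings $f_i$ are both sound.
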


Often we use the following consequence.

\begin{cor}\label{cor:zykov}
	If $H=(V, E)$ denotes a hypergraph of uncountable chromatic number and 
	$E'\subseteq E$ satisfies $\chi(V, E')\le \aleph_0$, then 
	$\chi(V, E\sm E')\ge \aleph_1$. \qed
\end{cor}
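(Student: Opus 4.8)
The plan is to apply Fact~\ref{f:zykov} to the two-part decomposition of the edge set into $E'$ and its complement $E\sm E'$, and then to finish off with elementary cardinal arithmetic. This is exactly the kind of situation the corollary is packaged for, so no new idea beyond Zykov's submultiplicativity will be needed.

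Concretely, I would first write $E=E'\cup(E\sm E')$, which is a covering of $E$ of the form required by Fact~\ref{f:zykov}, with index set $I=\{0,1\}$ and, say, $E_0=E'$ and $E_1=E\sm E'$. Applying the Fact to this covering yields
\[
	\chi(H)\le \chi(V, E')\cdot\chi(V, E\sm E')\,.
\]
From here I would argue by contraposition on the desired conclusion. Assume toward a contradiction that $\chi(V, E\sm E')\le\aleph_0$. Together with the hypothesis $\chi(V, E')\le\aleph_0$, the displayed inequality gives
\[
	\chi(H)\le\aleph_0\cdot\aleph_0=\aleph_0\,,
\]
the final equality being the standard fact that the product of two countably infinite cardinals is again $\aleph_0$. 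This contradicts the assumption $\chi(H)\ge\aleph_1$, and therefore $\chi(V, E\sm E')\ge\aleph_1$.

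I do not expect any genuine obstacle, since the whole argument rests on Fact~\ref{f:zykov} and on the absorptive behaviour of $\aleph_0$ under cardinal multiplication. The one point worth a moment's attention is simply that the product appearing in Fact~\ref{f:zykov} is a product of \emph{cardinals}, so that the relevant bound is $\aleph_0\cdot\aleph_0=\aleph_0$ rather than anything larger; once this is recognised the conclusion is immediate.
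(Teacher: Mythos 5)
Your proof is correct and is precisely the argument the paper intends: the corollary is stated with a \qed because it follows immediately from Fact~\ref{f:zykov} applied to the covering $E=E'\cup(E\sm E')$ together with $\aleph_0\cdot\aleph_0=\aleph_0$, exactly as you wrote. Nothing differs from the paper's route.
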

 
\begin{proof}[Proof of Theorem~\ref{thm:12}]
	Arguing indirectly, we fix the smallest integer $k$ such that for some positive 
	integer $n$ the expansion $K^{(k)}_{n, n}$ is non-obligatory. Theorem~\ref{thm:11} 
	tells us that $k\ge 3$. Let~$\kappa$ be the least 
	cardinal such that there exists a $K^{(k)}_{n, n}$-free, $k$-uniform hypergraph 
	$H=(\kappa, E)$ whose chromatic number is uncountable. 
	
	Set $t=kn^2+1$. 
	By a {\it delta system} we shall mean a subset $D\subseteq E$ of size $|D|=t$ 
	such that setting $d=\bigcap D$ we have $e\cap e'=d$ for all distinct $e, e'\in D$; 
	the set $d$ is also called the {\it root} of the delta system $D$. 
	For every $\ell\in [2, k)$ we define $H^{(\ell)}=(\kappa, E^{(\ell)})$ to be the 
	$\ell$-uniform hypergraph whose edges are the $\ell$-element sets which are roots 
	of such delta systems.
	
	\begin{claim}\label{clm:21}
		For every $\ell\in [2, k)$ we have $\chi(H^{(\ell)})\le \aleph_0$.
	\end{claim}    
	
	\begin{proof}
		Assume contrariwise that $\chi(H^{(\ell)})\ge \aleph_1$ holds for 
		some $\ell\in [2, k)$. Due to the minimality of $k$ the expansion 
		$K^{(\ell)}_{n, n}$ is obligatory and, hence, there is a copy 
		of $K^{(\ell)}_{n, n}$ in~$H^{(\ell )}$. Let $f_1, \dots, f_{n^2}$ be 
		the edges of such a copy. We want to extend these edges  
		recursively to edges $e_i\supseteq f_i$ of $H$ such that $e_1, \dots, e_{n^2}$ 
		form a copy of $K^{(k)}_{n, n}$. 
		
		Suppose that for some positive $i\le n^2$ the edges $e_1, \dots, e_{i-1}$
		have already been selected. Recall that $f_i$ is the root of a delta system. 
		Since the set 
		$g_i=(e_1\cup\dots\cup e_{i-1})\cup(f_i\cup\dots\cup f_{n^2})$
		satisfies $|g_i|\le kn^2<t$, this yields an edge $e_i\in E$ such 
		that $f_i\subseteq e_i$ and $g_i\cap e_i=f_i$. This completes the 
		selection of $e_1, \dots, e_{n^2}\in E$.
		
		Whenever $1\le i<j\le n^2$ we have $e_i\subseteq g_j$ and $f_j\subseteq g_i$,
		wherefore
				\[
			e_i\cap e_j
			=
			e_i\cap g_j\cap e_j
			=
			e_i\cap f_j
			=
			e_i\cap g_i\cap e_j
			=
			f_i\cap f_j\,.
		\]
				Thus the edges $e_1, \dots, e_{n^2}$ form indeed a copy 
		of $K^{(k)}_{n, n}$ in $H$, which is absurd. 
	\end{proof}
	
	Now for every $\ell\in [2, k)$ we let~$E_\ell$ be the set of all 
	edges $e\in E$ containing some $f\in E(H^{(\ell)})$,
	and then we consider the partition $E=E'\dcup E''$
	defined by $E'=\bigcup_{\ell \in [2, k)} E_\ell$ and $E''=E\sm E'$. 
	Since every proper colouring of $H^{(\ell)}$ is at the same time a proper 
	colouring of $(\kappa, E_\ell)$, Claim~\ref{clm:21} 
	yields $\chi(\kappa, E_\ell)\le \aleph_0$ for every $\ell\in [2, k)$.  
	So Fact~\ref{f:zykov} implies $\chi(\kappa, E')\le \aleph_0$ and 
	Corollary~\ref{cor:zykov} discloses $\chi(\kappa, E'')\ge \aleph_1$.
	
	Now we fix a continuous increasing sequence of elementary submodels 
	$(\ccM_i)_{1\le i<\cf(\kappa)}$ of $\ccM=\langle \kappa, E\rangle$
	with union $\ccM$ such that writing $\ccM_i=\langle M_i, E\cap M_i^{(k)}\rangle$
	we have $|M_i|<\kappa$ for every positive $i<\cf(\kappa)$. 
	Moreover, we set $M_0=\vn$ and $I_i=M_{i+1}\sm M_i$ for every $i<\cf(\kappa)$. 
		
	\begin{claim}\label{clm:22}
		For every edge $e\in E''$ there exists some $i<\cf(\kappa)$ such 
		that $|e\cap I_i|\ge k-1$.  
	\end{claim}
	
	\begin{proof}
		Let $i<\cf(\kappa)$ be maximal such that 
		$e\cap I_i\ne\vn$. Assume for the sake of contradiction 
		that the cardinality $\ell$ of $f=e\sm I_i$ is in $[2, k)$.
		Due to $e\not\in E'$ we have $f\not\in E(H^{(\ell)})$. This means that $f$ 
		fails to be the root of a delta system. 
		Let $e_0, \dots, e_{s-1}\in E\cap M_i^{(k)}$
		be a maximal sequence of edges such that $f\subseteq e_0\cap\dots\cap e_{s-1}$
		and $e_j\cap e_{j'}=f$ for all distinct $j, j'<s$. 
		
		Since $f$ is not a root, 
		we have $s<t$. Moreover, the fact that there is no edge 
		$e_\star\in E\cap M_i^{(k)}$ with $f\subseteq e_\star$ and $e_\star\cap e_j=f$
		for all $j<s$ is expressible in $\ccM_i$. 
		Owing to $\ccM_i\prec\ccM$ this statement 
		needs to hold in $\ccM$ as well. But the given edge~$e$
		contradicts this assertion. This proves $\ell\not\in [2, k)$ and our claim 
		follows.   
	\end{proof}
	
	Now for every $i<\cf(\kappa)$ we set 
		\begin{align*}
		E_i^\star&=\{e\in E''\colon e\subseteq I_i\}\,, \\
		E_i^{\star\star}&=\{e\in E''\colon |e\cap I_i|=k-1\}\,, \\
		\text{ and } \quad 
		F_i&=\{e\cap I_i\colon e\in E_i^{\star\star}\}\,.
	\end{align*}
		The minimality of $\kappa$ entails $\chi(I_i, E^\star_i)\le \aleph_0$
	for every $i<\cf(\kappa)$. As the sets $I_i$ are mutually disjoint, this 
	implies $\chi(\kappa, E^\star)\le \aleph_0$ for 
	$E^\star=\bigcup_{i<\cf(\kappa)} E_i^\star$. By Claim~\ref{clm:22} the union 
		\[
		E^{\star\star}=\bigcup_{i<\cf(\kappa)} E_i^{\star\star}
	\]
		satisfies
	$E''=E^\star\dcup E^{\star\star}$ and, therefore, Corollary~\ref{cor:zykov}
	implies $\chi(\kappa, E^{\star\star})\ge\aleph_1$. If for every $i<\cf(\kappa)$
	the chromatic number of the $(k-1)$-uniform hypergraph $(I_i, F_i)$ was at 
	most countable, then we could combine appropriate colourings $I_i\lra\omega$ 
	into a single colouring that would yield the contradiction 
	$\chi(\kappa, E^{\star\star})\le\aleph_0$.
	
	Thus we can fix some ordinal $i<\cf(\kappa)$ 
	such that $\chi(I_i, F_i)\ge\aleph_1$. Owing to our minimal choice of $k$,
	for $q=tn^3+n$ the expansion $K^{(k-1)}_{q, q}$ is obligatory. So 
	there exist two disjoint sets $X, Y\subseteq I_i$ of size $q$ such that for 
	every pair $(x, y)\in X\times Y$ there is an edge $f_{xy}\in F_i$ containing~$x$
	and~$y$. Moreover, these edges have the property that all distinct 
	pairs $(x, y), (x', y')\in X\times Y$ satisfy 
	$f_{xy}\cap f_{x'y'}=\{x, y\}\cap \{x', y'\}$. By the definition of $F_i$ 
	there is for every pair $(x, y)\in X\times Y$ an ordinal $\beta_{xy}\not\in I_i$
	such that $e_{xy}=f_{xy}\cup\{\beta_{xy}\}$ is in $E^{\star\star}_i$.
	
	\begin{figure}[h!]
	\centering
	\begin{tikzpicture}
		
		\coordinate (a12) at (0,0);
		\coordinate (a21) at (-1.1,1);
		\coordinate (a22) at (0,1);
		\coordinate (a23) at (1.1,1);
		\coordinate (a31) at (-1.1,2);
		\coordinate (a32) at (0,2);
		\coordinate (a33) at (1.1,2);
		\coordinate (a42) at (0,3.1);
		\coordinate (a51) at (-.9,3.9);
		\coordinate (a52) at (0,3.9);
		\coordinate (a53) at (.9,3.9);
		
		\draw (-5,3.9)--(2,3.9);
		\draw (-5,3.1)--(2,3.1);
		
			\qedge{(-1.1,3.9)}{(a52)}{(1.1,3.9)}{4pt}{.4pt}{black}{red, opacity=0};
				\qedge{(-4.3,3.9)}{(-3,3.9)}{(-2.4,3.9)}{4pt}{.4pt}{black}{red, opacity=0};
				\qedge{(-2,3.1)}{(-1,3.1)}{(0,3.1)}{4pt}{.4pt}{black}{red, opacity=0};
		
		\draw [amet, line width=2pt] (a12)--(a52);
		\draw [amet, line width=2pt] (a12) [out=170, in=-70] to (a21) [out=105, in = -105] to (a31)[out=60, in=220] to (a42)--(a53);
		\draw [amet, line width=2pt] (a12) [out=10, in=-110] to (a23) [out=75, in = -75] to (a33)[out=120, in=-40] to (a42)--(a51);
				
		\draw (-6,-1) rectangle (3,.5);		
		\draw (-6,5) rectangle (3,.8);		
				
		\foreach \i in {12,21,22,23,31,32,33,42,51,52,53} \fill [amet!70!black](a\i) circle (2.5pt);
		
		\node at (2.4,3.9) {$X$};
		\node at (2.4,3.05) {$Y$};
		\node at (.42,3.23) {$\upsilon$};
		\node at (-2,2.7) {$Y_\star$};
		\node at (-3.35,4.3) {$X_\star$};
		\node at (0,4.3) {$T$};
		\node at (.4,-.2) {$\beta$};
		\node at (-6.5,-.2) {\Large $B$};
		\node at (-6.5,2.8) {\Large $I_i$};
		\end{tikzpicture}
\caption{A delta system with root $\{\beta, \upsilon\}$}
\label{fig21}
\end{figure}
	
	Let us call a pair $(X_\star, Y_\star)$ of 
	subsets $X_\star\subseteq X$, $Y_\star\subseteq Y$
	{\it good} if $|X_\star|, |Y_\star|\le n$ and the $|X_\star|\cdot |Y_\star|$
	ordinals $\beta_{xy}$ with $x\in X_\star$, $y\in Y_\star$ are distinct. 
	For instance, the pair $(\vn, \vn)$ is good. Fix a good pair $(X_\star, Y_\star)$
	such that $|X_\star|+|Y_\star|$ is maximal and set 
	$B=\{\beta_{xy}\colon x\in X_\star \text{ and } y\in Y_\star\}$. 
	If $|X_\star|=|Y_\star|=n$, then the $n^2$ edges $e_{xy}$ 
	with $x\in X_\star$, $y\in Y_\star$ form a copy of $K^{(k)}_{n, n}$ in $H$, 
	which is absurd.

	Thus we can assume without loss of generality that $|X_\star|<n$. For every 
	$x\in X\sm X_\star$ the pair $(X_\star\cup\{x\}, Y_\star)$ fails to be good, 
	which means that there are ordinals $\upsilon(x)\in Y_\star$ and $\beta(x)\in B$
	such that $\beta_{x, \upsilon(x)}=\beta(x)$. There are at 
	most $|Y_\star|\cdot |B|\le n^3$ possibilities for the 
	pair $(\upsilon(x), \beta(x))$. Together with $|X\sm X_\star|\ge q-n=tn^3$
	this shows that for some pair $(\upsilon, \beta)\in Y_\star\times B$
	there exists a set $T\subseteq X\sm X_\star$ of size $t$ such that 
	$(\upsilon(x), \beta(x))=(\upsilon, \beta)$ holds for every $x\in T$.
	The $t$ edges $e_{x\upsilon}$ with $x\in T$ form a delta system with 
	root $\{\upsilon, \beta\}$. But this contradicts the fact that these 
	edges are in $E''$. This proves Theorem~\ref{thm:12}.
\end{proof}

\subsection*{Acknowledgement} We are grateful to {\sc Joanna Polcyn} for the 
beautiful figures.

\begin{bibdiv}
\begin{biblist}

\bib{EGH}{article}{
   author={Erd\H{o}s, P.},
   author={Galvin, F.},
   author={Hajnal, A.},
   title={On set-systems having large chromatic number and not containing
   prescribed subsystems},
   conference={
      title={Infinite and finite sets (Colloq., Keszthely, 1973; dedicated
      to P. Erd\H{o}s on his 60th birthday), Vols. I, II, III},
   },
   book={
      series={Colloq. Math. Soc. J\'{a}nos Bolyai, Vol. 10},
      publisher={North-Holland, Amsterdam-London},
   },
   date={1975},
   pages={425--513},
   review={\MR{398876}},
}

\bib{EH66}{article}{
   author={Erd\H{o}s, P.},
   author={Hajnal, A.},
   title={On chromatic number of graphs and set-systems},
   journal={Acta Math. Acad. Sci. Hungar.},
   volume={17},
   date={1966},
   pages={61--99},
   issn={0001-5954},
   review={\MR{193025}},
   doi={10.1007/BF02020444},
}

\bib{EHRoth}{article}{
   author={Erd\H{o}s, P.},
   author={Hajnal, A.},
   author={Rothchild, B.},
   title={``On chromatic number of graphs and set-systems'' (Acta Math.
   Acad. Sci. Hungar. {\bf 17} (1966), 61--99) by Erd\H{o}s and Hajnal},
   conference={
      title={Cambridge Summer School in Mathematical Logic},
      address={Cambridge},
      date={1971},
   },
   book={
      series={Lecture Notes in Math., Vol. 337},
      publisher={Springer, Berlin-New York},
   },
   date={1973},
   pages={531--538},
   review={\MR{387103}},
}

\bib{HK08}{article}{
   author={Hajnal, A.},
   author={Komj\'{a}th, P.},
   title={Obligatory subsystems of triple systems},
   journal={Acta Math. Hungar.},
   volume={119},
   date={2008},
   number={1-2},
   pages={1--13},
   issn={0236-5294},
   review={\MR{2400791}},
   doi={10.1007/s10474-007-6231-2},
}

\bib{kom01}{article}{
   author={Komj\'{a}th, P.},
   title={Some remarks on obligatory subsystems of uncountably chromatic
   triple systems},
   note={Paul Erd\H{o}s and his mathematics (Budapest, 1999)},
   journal={Combinatorica},
   volume={21},
   date={2001},
   number={2},
   pages={233--238},
   issn={0209-9683},
   review={\MR{1832448}},
   doi={10.1007/s004930100021},
}
	
\bib{kom08}{article}{
   author={Komj\'{a}th, P.},
   title={An uncountably chromatic triple system},
   journal={Acta Math. Hungar.},
   volume={121},
   date={2008},
   number={1-2},
   pages={79--92},
   issn={0236-5294},
   review={\MR{2463251}},
   doi={10.1007/s10474-008-7179-6},
}

\bib{Mubayi}{article}{
   author={Mubayi, Dhruv},
   title={A hypergraph extension of Tur\'{a}n's theorem},
   journal={J. Combin. Theory Ser. B},
   volume={96},
   date={2006},
   number={1},
   pages={122--134},
   issn={0095-8956},
   review={\MR{2185983}},
   doi={10.1016/j.jctb.2005.06.013},
}

\bib{girth-survey}{article}{
	author={Chr. Reiher},
	title={Graphs of large girth},
	note={Unpublished manuscript (61 pages)},
}

\bib{Zykov}{article}{
   author={Zykov (\rn{Zykov}), A. A.},
   title={\rn{O nekotorykh svoi0stvakh linei0nykh kompleksov}},
   language={Russian},
   journal={\rn{Mat. sbornik}},
   volume={24(66)},
   date={1949},
   pages={163--188},
   review={\MR{35428}},
}		

\end{biblist}
\end{bibdiv}
\end{document}